\newtheorem{theorem}{Theorem}[section]
\newtheorem{proposition}[theorem]{Proposition}
\newtheorem{lemma}[theorem]{Lemma}
\numberwithin{equation}{section}
\theoremstyle{remark}
\newtheorem{remark}[theorem]{Remark}
\newcommand{\II}{\mathop{\mathrm{II}}\nolimits}
\newcommand{\Ric}{\mathop{\mathrm{Ric}}\nolimits}
\title{Metrics with prescribed Ricci curvature near the boundary of a manifold}
\author{Artem Pulemotov\thanks{School of Mathematics and Physics, The
University of Queensland, St Lucia,~QLD 4072,
Australia}~\thanks{Department of Mathematics, The University of
Chicago, 5734 South University Ave, Chicago,~IL
60637-1514, USA} \\
\small{\texttt{a.pulemotov@uq.edu.au}}}
\begin{document}

\maketitle

\begin{abstract}
Suppose $M$ is a manifold with boundary. Choose a point
$o\in\partial M$. We investigate the prescribed Ricci curvature
equation $\Ric(G)=T$ in a neighborhood of $o$ under natural boundary
conditions. The unknown $G$ here is a Riemannian metric. The letter
$T$ on the right-hand side denotes a (0,2)-tensor. Our main theorems
address the questions of the existence and the uniqueness of
solutions. We explain, among other things, how these theorems may be
used to study rotationally symmetric metrics near the boundary of a
solid torus $\mathcal T$. The paper concludes with a brief
discussion of the Einstein equation on $\mathcal T$.
\end{abstract}

\section{Introduction}

The main theme of the present paper is the prescribed Ricci
curvature equation. We begin with a brief historical review. Suppose
$M$ is a closed manifold. Let~$T$ be a $(0,2)$-tensor on $M$.
Consider the prescribed Ricci curvature equation
\begin{align}\label{intro_RicciEq}
\Ric(G)=T
\end{align}
for a Riemannian metric $G$ on $M$. Whether or
not~\eqref{intro_RicciEq} has a solution is a significant question
in geometric analysis. The first major step towards answering this
question was taken by D.~DeTurck. More specifically, assume the
tensor $T$ is nondegenerate at a point $o\in M$. Then it is possible
to solve~\eqref{intro_RicciEq} in a neighborhood of~$o$. This result
was originally established in D.~DeTurck's paper~\cite{DDT81}.
Alternative proofs appeared later in several sources
including~\cite{AB87,JKweb}. One might not be able, however, to
solve equation~\eqref{intro_RicciEq} on \emph{all} of~$M$. A strong
nonexistence theorem for~\eqref{intro_RicciEq} was offered by
D.~DeTurck and N.~Koiso in~\cite{DDTNK84}. It tells us that,
whenever $T$ is positive-definite, there is a constant $c_T>0$ such
that $c_TT$ is not the Ricci curvature of any Riemannian metric
on~$M$. The books~\cite{JK85,AB87,TA98} contain rather detailed
surveys of the results discussed in this paragraph. For related
work, check out~\cite{DDT83,AB86,ED02,PhD03,RPKT09} and references
therein.

Whether a solution of~\eqref{intro_RicciEq} is unique in any sense
is also an important question in geometric analysis. Progress on
this question was made in many papers such
as~\cite{RH84,DDTNK84,XX92,JCDDT94,ED02,RPKT06}. As of today,
however, a complete answer is still lacking. The reader may
consult~\cite{JK85,AB87,TA98} for surveys of some of the results.
This concludes our historical review. A little more information will
be given in the end of Section~\ref{sec_prescribed}. Meanwhile, we
are ready to explain the problem we intend to investigate in the
present paper.

Let us change our setup a little. From now on, we suppose $M$ is a
manifold with boundary~$\partial M$. As before, we choose a
$(0,2)$-tensor $T$ on $M$. Consider the prescribed Ricci curvature
equation~\eqref{intro_RicciEq} on~$M$. A new question arises: does
this equation have a solution satisfying interesting boundary
conditions? Assume $T$ is nondegenerate. The main result
of~\cite{DDT81} implies that, given a point in the interior of $M$,
one can find a neighborhood $V$ of this point and a Riemannian
metric $G$ on $V$ such that~\eqref{intro_RicciEq} holds in $V$. On
the other hand, the nonexistence theorem of~\cite{DDTNK84} suggests
that, at least for positive-definite $T$, it may be problematic to
solve~\eqref{intro_RicciEq} on all of $M$; cf.
Remark~\ref{rem_global} below. Thus, it seems natural to refine the
question we just asked as follows: given $o\in\partial M$, can one
find a neighborhood $V$ of $o$ and a Riemannian metric $G$ such
that~\eqref{intro_RicciEq} holds in $V$ and $G$ satisfies
interesting boundary conditions on $V\cap\partial M$? Also, it is
important to understand what can be said about the uniqueness of
$G$. Up until now, not much was known about~\eqref{intro_RicciEq} on
manifolds with boundary (however, see~\cite{MAMH08}). Related topics
were studied in~\cite{DDT83,MA08,MAMH08,AP10} and several other
works. In particular, one indication of appropriate boundary
conditions comes from general relativity. This is demonstrated
by~\cite{DDT83} and references therein (specifically, see the papers
by Y.~Choquet-Bruhat).

Choose a neighborhood $U$ of $o\in\partial M$. Suppose
$(r,q_1,\ldots,q_{n-1})$ is a coordinate system in $U$ with $r$
taking values in $[0,\infty)$ and $q_1,\ldots,q_{n-1}$ taking values
in $\mathbb R$. Thus, $\frac\partial{\partial r}$ is transverse to
the boundary on $U\cap\partial M$. This paper addresses the
questions raised in the previous paragraph stipulating that the
components of the Riemannian metric $G$ with respect to
$(r,q_1,\ldots,q_{n-1})$ take a certain simple form. Related
research was carried out by J.~Cao and D.~DeTurck in~\cite{JCDDT94}.
The reader will find a brief discussion of this in the end of
Section~\ref{sec_prescribed}. Meanwhile, let us explain our main
results in more detail.

We consider the prescribed Ricci curvature
equation~\eqref{intro_RicciEq} near $o\in\partial M$. The boundary
conditions we impose are
\begin{align}\label{intro_BC}
G_{\partial M}=R,~\II(G)=S;
\end{align}
cf.~\cite{DDT83}. Here, $G_{\partial M}$ is the metric induced by
$G$ on $U\cap\partial M$, and $\II(G)$ is the second fundamental
form of $U\cap\partial M$ computed in~$G$ with respect to the
outward unit normal. The right-hand sides, $R$ and $S$, are
$(0,2)$-tensors on $U\cap\partial M$. It is clear that $R$ must be
positive-definite. We restrict our attention to Riemannian metrics
near $o\in\partial M$ that satisfy two requirements: Firstly, they
are diagonal in the coordinates $(r,q_1,\ldots,q_{n-1})$. Secondly,
their components in these coordinates do not depend on
$q_1,\ldots,q_{n-1}$. Such Riemannian metrics will be called type
$\mathcal A$ metrics near $o\in\partial M$. Further in the text, we
will discuss the prescribed Ricci curvature equation on a solid
torus. That discussion will furnish geometrically meaningful
examples of type~$\mathcal A$ metrics near $o\in\partial M$.

Now we can summarize the main results of the present paper. They are
stated in Section~\ref{sec_prescribed} as
Theorems~\ref{prop_nec_cond}, \ref{thm_loc_ex}, and \ref{thm_uniq}.
Suppose the tensor $T$ on the right-hand side
of~\eqref{intro_RicciEq} is nondegenerate (actually, it suffices for
our purposes to impose an assumption that is weaker but more
difficult to formulate). Theorems~\ref{prop_nec_cond}
and~\ref{thm_loc_ex} provide a necessary and sufficient condition
for the existence of a type $\mathcal A$ metric on a neighborhood of
$o\in\partial M$ solving~\eqref{intro_RicciEq} and
satisfying~\eqref{intro_BC}. This condition is exceedingly easy to
verify. Theorem~\ref{thm_uniq} shows that any two type $\mathcal A$
metrics near~$o\in\partial M$ solving~\eqref{intro_RicciEq} and
satisfying~\eqref{intro_BC} must be the same; cf.~\cite{MAMH08}. One
more comment is in order here. The methods in
Section~\ref{sec_prescribed} can be used to investigate invariant
metrics with prescribed Ricci curvature on cohomogeneity one
manifolds. We explain this in Remark~\ref{rem_cohom1} below.

Consider a solid torus $\mathcal T$. Section~\ref{sec_prescr_torus}
demonstrates how our Theorems~\ref{prop_nec_cond}, \ref{thm_loc_ex},
and \ref{thm_uniq} help study rotationally symmetric metrics on a
neighborhood of $\partial\mathcal T$. One can check that such
metrics are type $\mathcal A$ metrics near every point of
$\partial\mathcal T$ in properly chosen coordinates. They often
occur in applications; see, e.g.,~\cite{JDDKAY10} and references
therein.

The paper ends with a brief look at the Einstein equation. The
solutions of this equation in the class of rotationally symmetric
metrics on $\mathcal T$ can be found explicitly through a simple
computation (note that they are metrics with constant sectional
curvature since $\mathcal T$ is 3-dimensional). We list them in
Section~\ref{sec_Einstein}, Proposition~\ref{prop_Einstein_+}. There
is a connection between the content of that section and the topics
discussed in~\cite{MA08,ADMW99}.

N.B.: Throughout the paper, we deal with smooth tensors. This seems
to be the most natural way to present the material. It is possible,
however, to modify our results so that they apply to tensors with
weaker differentiability properties. We leave the details to the
reader.

\section{Tensors near the boundary}\label{sec_prescribed}

Suppose $M$ is a smooth oriented $n$-dimensional ($n\ge2$) manifold
with boundary $\partial M$. Our principal goal is to study the
prescribed Ricci curvature equation on $M$. In this section, we
present the notation, review the necessary background, and state two
lemmas.

Choose a point $o\in\partial M$. Assume $U$ is a neighborhood of $o$
and $(r,q_1,\ldots,q_{n-1})$ is a coordinate system in $U$ centered
at $o$. The parameter $r$ takes values in $[0,\infty)$, while
$q_1,\ldots,q_{n-1}$ takes values in $\mathbb R$. The intersection
$U\cap\partial M$ consists of the points $(0,q_1,\ldots,q_{n-1})$ in
$U$ for which $q_1,\ldots,q_{n-1}\in\mathbb R$. Given
$x\in(0,\infty)$, define the neighborhood $U_x$ of $o$ in $M$ to be
the set of $(r,q_1,\ldots,q_{n-1})$ with $r\in[0,x)$ and
$q_1,\ldots,q_{n-1}\in\mathbb R$. We say that a smooth (0,2)-tensor
$T$ on $U_x$ is a \emph{type $\mathcal A$ tensor on $U_x$} if
\begin{align}\label{rot_sym_tensor}
T=\sigma(r)\,dr\otimes dr+\sum_{i=1}^{n-1}\phi_i(r)\,dq_i\otimes
dq_i,\qquad r\in[0,x),
\end{align}
for functions $\sigma$ and $\phi_1,\ldots,\phi_{n-1}$ from $[0,x)$
to $\mathbb R$. Thus, $T$ has to be diagonal in the coordinates
$(r,q_1,\ldots,q_{n-1})$, and its components have to be independent
of $q_1,\ldots,q_{n-1}$. Accordingly, a smooth Riemannian metric $G$
on $U_x$ is a \emph{type $\mathcal A$ metric on $U_x$} if
\begin{align}\label{rot_sym_metric}
G=h^2(r)\,dr\otimes dr+\sum_{i=1}^{n-1}f_i^2(r)\,dq_i\otimes
dq_i,\qquad r\in[0,x),
\end{align}
where $h$ and $f_1,\ldots,f_{n-1}$ are functions from $[0,x)$ to
$(0,\infty)$. We will now state two computational lemmas, which we
will use throughout the paper.

Suppose $G$ is a type $\mathcal A$ metric on $U_x$ given
by~\eqref{rot_sym_metric}. Its Ricci curvature will be denoted by
$\Ric(G)$. The following lemma expresses $\Ric(G)$ in terms of $h$
and $f_1,\ldots,f_{n-1}$. Originally, a version of this lemma for
$n=3$ was provided to us by Andrea Young. Related computations may
be found in, e.g.,~\cite{RH84,CB98,ADMW99,JHEMW00,JDDKAY10}
and~\cite[Chapter~9]{AB87}. In this section and in
Section~\ref{sec_prescribed}, the prime designates differentiation
with respect to $r$.

\begin{lemma}\label{lem_Ricci_curv}
The Ricci curvature of the metric $G$ given
by~\eqref{rot_sym_metric} satisfies the equality
\begin{align*}
\Ric(G)=\sum_{j=1}^{n-1}\left(-\frac{f_j''}{f_j}+\frac{h'f_j'}{hf_j}\right)dr\otimes
dr+\sum_{i=1}^{n-1}\Biggl(-\frac{f_if_i''}{h^2}+\frac{h'f_if_i'}{h^3}+\frac{(f_i')^2}{h^2}
-\sum_{j=1}^{n-1}\frac{f_if_j'f_i'}{h^2f_j}\Biggr)dq_i\otimes dq_i
\end{align*}
on the set $U_x$.
\end{lemma}

\begin{proof}
Direct calculation.
\end{proof}

The metric $G$ induces a Riemannian metric $G_{\partial M}$ on
$U\cap\partial M$. It is evident that
\begin{align}\label{ind_metric}
G_{\partial M}=\sum_{i=1}^{n-1}f_i^2(0)\,dq_i\otimes dq_i.
\end{align}
We denote by $\II(G)$ the second fundamental form of $U\cap\partial
M$ computed in the metric $G$ with respect to the outward unit
normal. It is easy to express $\II(G)$ in terms of $h$ and
$f_1,\ldots,f_{n-1}$.

\begin{lemma}\label{lem_2ff}
The equality
\begin{align*}
\II(G)=-\sum_{i=1}^{n-1}\frac{f_i(0)f_i'(0)}{h(0)}\,dq_i\otimes dq_i
\end{align*}
holds.
\end{lemma}

\begin{proof}
Another direct calculation.
\end{proof}

\section{Prescribed Ricci curvature}\label{sec_prescribed}

In this section, we discuss the existence of solutions to the
prescribed Ricci curvature equation in the class of type $\mathcal
A$ metrics on a neighborhood of the point $o\in\partial M$. Also, we
will address the issue of uniqueness. More precisely, fix
$x\in(0,\infty)$ and consider a tensor $T$ on $U_x$ given by
formula~\eqref{rot_sym_tensor}. Suppose
$\alpha_1,\ldots,\alpha_{n-1}$ and $\eta_1,\ldots,\eta_{n-1}$ are
real numbers with $\alpha_1,\ldots,\alpha_{n-1}>0$. We introduce the
(0,2)-tensors $R$ and $S$ on $U\cap\partial M$ by setting
\begin{align}\label{def_RS}
R=\sum_{i=1}^{n-1}\alpha_i^2\,dq_i\otimes dq_i,\qquad
S=\sum_{i=1}^{n-1}\eta_i\,dq_i\otimes dq_i.
\end{align}
Our goal is to answer the following questions:
\begin{enumerate}
\item
Is it possible to find, for some $\epsilon_0\in(0,x)$, a type
$\mathcal A$ metric $G$ on $U_{\epsilon_0}$ such that the Ricci
curvature of $G$ equals~$T$, the induced metric $G_{\partial M}$
equals~$R$, and the second fundamental form $\II(G)$ coincides
with~$S$?
\item
When such a $G$ exists, is it unique?
\end{enumerate}
In the spirit of~\cite{DDT81}, we will impose a nondegeneracy-type
assumption on~$T$. More specifically, we will require that $\sigma$
be bounded away from~0. Our first result is a relatively simple
necessary condition for the existence of~$G$. We will demonstrate
below that this condition is also sufficient.

\begin{theorem}\label{prop_nec_cond}
Let $G$ be a type $\mathcal A$ metric on $U_x$ such that $\Ric(G)=T$
with the tensor $T$ given by~\eqref{rot_sym_tensor}.
Assume~$\sigma(0)$ is not equal to~$0$. Suppose that $G_{\partial
M}=R$ and $\II(G)=S$ on $U\cap\partial M$ with the tensors $R$ and
$S$ defined by~\eqref{def_RS}. Then the quantity
\begin{align*}
\frac1{\sigma(0)}\sum_{j=1}^{n-1}\Bigg(\frac{1}{\alpha_j^2}\phi_j(0)
+2\sum_{k=j+1}^{n-1}\frac{\eta_k\eta_j}{\alpha_k^2\alpha_j^2}\Bigg)
\end{align*}
is positive.
\end{theorem}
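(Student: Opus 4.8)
The plan is to read off the relevant scalar equations from Lemmas~\ref{lem_Ricci_curv} and~\ref{lem_2ff} and combine them algebraically. Write $G$ in the form~\eqref{rot_sym_metric}, so that $\Ric(G)=T$ becomes three ODEs for $f$, $g$, $h$. The boundary conditions $G_{\partial\mathcal T}=R$ and $\II(G)=S$ translate, via~\eqref{ind_metric} and Lemma~\ref{lem_2ff}, into
\begin{align*}
f(1)=\alpha,\quad g(1)=\beta,\quad \frac{\alpha f_r(1)}{h(1)}=\eta,\quad \frac{\beta g_r(1)}{h(1)}=\theta.
\end{align*}
So I would record $f_r(1)=\eta h(1)/\alpha$ and $g_r(1)=\theta h(1)/\beta$ for later use.

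Next I would evaluate the $d\lambda\otimes d\lambda$ and $d\mu\otimes d\mu$ components of the equation $\Ric(G)=T$ at $r=1$, using the expressions from Lemma~\ref{lem_Ricci_curv}:
\begin{align*}
\phi(1)&=-\frac{ff_{rr}}{h^2}+\frac{ff_rh_r}{h^3}-\frac{ff_rg_r}{gh^2}\Big|_{r=1},\\
\psi(1)&=-\frac{gg_{rr}}{h^2}+\frac{gg_rh_r}{h^3}-\frac{gf_rg_r}{fh^2}\Big|_{r=1}.
\end{align*}
The idea is to form the combination $\beta^2\phi(1)+\alpha^2\psi(1)=g(1)^2\phi(1)+f(1)^2\psi(1)$, so that the first two terms of each line acquire a common shape, while the cross terms $-ff_rg_r g^2/(gh^2)$ and $-gf_rg_r f^2/(fh^2)$ combine to $-2fgf_rg_r/h^2$ evaluated at $r=1$. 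Substituting the boundary relations $f_r(1)=\eta h(1)/\alpha$, $g_r(1)=\theta h(1)/\beta$, $f(1)=\alpha$, $g(1)=\beta$, that cross term becomes exactly $-2\eta\theta$. Hence $2\eta\theta+\beta^2\phi(1)+\alpha^2\psi(1)$ equals the sum of the remaining (non-cross) terms, which I expect to simplify to something manifestly of one sign — plausibly $-\big(g^2 ff_{rr}+f^2 gg_{rr}\big)/h^2+\big(g^2 ff_r+f^2 gg_r\big)h_r/h^3$ at $r=1$, and then the point is to see that, after dividing by $\sigma(1)$, this is forced to be positive.

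The main obstacle, and the step requiring genuine care rather than bookkeeping, is showing that the leftover expression divided by $\sigma(1)$ is positive. For this I would bring in the $dr\otimes dr$ component of $\Ric(G)=T$, namely $\sigma(1)=-f_{rr}/f+f_rh_r/(fh)-g_{rr}/g+g_rh_r/(gh)$ at $r=1$; note that this is precisely $\big(g^2ff_{rr}+f^2gg_{rr}\big)/(f^2g^2)$-type data with the same $h_r$ correction. Comparing the leftover numerator with $\sigma(1)$ times a positive factor (I anticipate the factor is $f(1)^2g(1)^2/h(1)^2=\alpha^2\beta^2/h(1)^2$), the ratio $\frac1{\sigma(1)}\big(2\eta\theta+\beta^2\phi(1)+\alpha^2\psi(1)\big)$ collapses to $\alpha^2\beta^2/h(1)^2>0$, since $h$ is positive. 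Thus the sign is not just ``positive for some reason'' but an exact identity: the quantity in the statement equals $\alpha^2\beta^2/h(1)^2$. I would state that identity explicitly, since it both proves the theorem and foreshadows the sufficiency direction in Theorem~\ref{thm_loc_ex} (one recovers $h(1)$, and hence the Cauchy data for the ODE system, from $T$, $R$, $S$). The nondegeneracy hypothesis $\sigma(1)\neq0$ is exactly what makes this division legitimate.
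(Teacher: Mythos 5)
Your proposal is correct and follows essentially the same route as the paper: both combine the three components of $\Ric(G)=T$ with the boundary data $f(1)=\alpha$, $g(1)=\beta$, $f_r(1)=\eta h(1)/\alpha$, $g_r(1)=\theta h(1)/\beta$ into the single identity $2\eta\theta+\beta^2\phi(1)+\alpha^2\psi(1)=\alpha^2\beta^2\sigma(1)/h^2(1)$, and then divide by $\sigma(1)$. The paper eliminates $f_{rr}$ and $g_{rr}$ from the third equation using the first two, whereas you form the weighted sum $\beta^2\phi(1)+\alpha^2\psi(1)$ and match it against $f^2g^2\sigma/h^2$; these are the same algebraic manipulation, and your anticipated cancellations do work out exactly as you describe.
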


\begin{proof}
Let us write $G$ in the form~\eqref{rot_sym_metric}. According to
Lemma~\ref{lem_Ricci_curv}, the fact that $\Ric(G)=T$ translates as
\begin{align}\label{main_ODE_sys}
\sum_{j=1}^{n-1}\left(-\frac{f_j''}{f_j}+\frac{h'f_j'}{hf_j}\right)&=\sigma,\notag\\
-\frac{f_if_i''}{h^2}+\frac{h'f_if_i'}{h^3}+\frac{(f_i')^2}{h^2}
-\sum_{j=1}^{n-1}\frac{f_if_j'f_i'}{h^2f_j}&=\phi_i,\qquad
i=1,\ldots,n-1.
\end{align}
We find $f_1'',\ldots,f_{n-1}''$ from the equations involving
$\phi_1,\ldots,\phi_{n-1}$ and substitute the results into the
equation involving $\sigma$. This yields
\begin{align*}
\sum_{j=1}^{n-1}\Bigg(\frac{h^2}{f_j^2}\phi_j+2\sum_{k=j+1}^{n-1}\frac{f_k'f_j'}{f_kf_j}\Bigg)=\sigma
\end{align*}
on $[0,x)$. Using formula~\eqref{ind_metric} and
Lemma~\ref{lem_2ff}, we derive
\begin{align*}
\sum_{j=1}^{n-1}\Bigg(\frac{h^2(0)}{\alpha_j^2}\phi_j(0)+2\sum_{k=j+1}^{n-1}\frac{h^2(0)\eta_k\eta_j}{\alpha_k^2\alpha_j^2}\Bigg)=\sigma(0).
\end{align*}
Consequently,
\begin{align}\label{fnl_aux_fla1}
\frac1{\sigma(0)}\sum_{j=1}^{n-1}\Bigg(\frac{1}{\alpha_j^2}\phi_j(0)
+2\sum_{k=j+1}^{n-1}\frac{\eta_k\eta_j}{\alpha_k^2\alpha_j^2}\Bigg)&=\frac1{h^2(0)}>0.
\end{align}
\end{proof}

\begin{remark}
Suppose the conditions of Theorem~\ref{prop_nec_cond} are satisfied.
Then the outward unit normal vector field on $U\cap\partial M$ with
respect to the metric $G$ is equal to
\begin{align*}
-\frac1{h(0)}\,\frac\partial{\partial
r}=-\sqrt{\frac1{\sigma(0)}\sum_{j=1}^{n-1}\Bigg(\frac1{\alpha_j^2}\phi_j(0)
+2\sum_{k=j+1}^{n-1}\frac{\eta_k\eta_j}{\alpha_k^2\alpha_j^2}\Bigg)}\,\frac\partial{\partial
r}\,.
\end{align*}
This is a simple consequence of formula~\eqref{fnl_aux_fla1} in the
proof above.
\end{remark}

\begin{remark}\label{rem_Gauss-Codazzi}
It is possible to derive Theorem~\ref{prop_nec_cond} from the Gauss
equation. Indeed, together with our assumptions and the fact that
$R$ is a flat metric on $U\cap\partial M$, this equation implies
\begin{align*}
\sum_{i=1}^{n-1}\frac1{\alpha_i^2}\phi_i(0)
=\frac{\sigma(0)}{h^2(0)}+\sum_{i=1}^{n-1}\frac{\eta_i^2}{\alpha_i^4}-\Bigg(\sum_{i=1}^{n-1}\frac{\eta_i}{\alpha_i^2}\Bigg)^2.
\end{align*}
An elementary computation then leads to
formula~\eqref{fnl_aux_fla1}.
\end{remark}

\begin{remark}
As in Theorem~\ref{prop_nec_cond}, suppose $G$ is a type $\mathcal
A$ metric on $U_x$ such that $\Ric(G)=T$, $G_{\partial M}=R$, and
$\II(G)=S$ with $T$, $R$, and $S$ given by~\eqref{rot_sym_tensor}
and~\eqref{def_RS}. Assume that $\sigma(0)=0$. Then
\begin{align*}
\sum_{j=1}^{n-1}\Bigg(\frac{1}{\alpha_j^2}\phi_j(0)
+2\sum_{k=j+1}^{n-1}\frac{\eta_k\eta_j}{\alpha_k^2\alpha_j^2}\Bigg)=0.
\end{align*}
One can verify this easily by retracing the above proof of
Theorem~\ref{prop_nec_cond}.
\end{remark}

We are ready to formulate our next result. It will complete our
discussion of the existence of $G$.

\begin{theorem}\label{thm_loc_ex}
Let $T$ be a type $\mathcal A$ tensor on $U_x$ given by
formula~\eqref{rot_sym_tensor}. Suppose $\sigma(0)$ is not equal
to~$0$. Let $R$ and $S$ be tensors on $U\cap\partial M$ defined
by~\eqref{def_RS}. Assume that
\begin{align}\label{assu}
\frac1{\sigma(0)}\sum_{j=1}^{n-1}\Bigg(\frac{1}{\alpha_j^2}\phi_j(0)
+2\sum_{k=j+1}^{n-1}\frac{\eta_k\eta_j}{\alpha_k^2\alpha_j^2}\Bigg)>0.
\end{align}
Then, for some $\epsilon_0\in(0,x)$, there exists a type $\mathcal
A$ metric $G$ on $U_{\epsilon_0}$ satisfying the equality
$\Ric(G)=T$ on $U_{\epsilon_0}$ together with the equalities
$G_{\partial M}=R$ and $\II(G)=S$ on $U\cap\partial M$.
\end{theorem}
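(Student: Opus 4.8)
The plan is to reduce the system \eqref{main_ODE_sys} to a solvable initial value problem on an interval $(1-\epsilon_0,1]$ and then read off the boundary conditions. First I would observe, as in the proof of Theorem~\ref{prop_nec_cond}, that the three equations in \eqref{main_ODE_sys} are not independent: combining the first two to express $f_{rr}$ and $g_{rr}$ and substituting into the third produces the first-order constraint
\begin{align}\label{constraint_eq}
\frac{2f_rg_r}{fg}+\frac{h^2}{f^2}\,\phi+\frac{h^2}{g^2}\,\psi=\sigma,\qquad r\in(1-\epsilon_0,1].
\end{align}
This suggests treating $h$ not as an independent unknown but as a quantity determined by $f$, $g$, $f_r$, $g_r$ via \eqref{constraint_eq}. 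The subtlety is that $\phi$, $\psi$, $\sigma$ are only assumed nonzero at $r=1$ (more precisely, $\sigma(1)\neq0$); by continuity $\sigma$ stays away from $0$ on a possibly smaller neighborhood, which is where $\epsilon_0$ will come from.

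Next I would set up the initial data at $r=1$. The conditions $G_{\partial\mathcal T}=R$ and $\II(G)=S$, together with \eqref{ind_metric} and Lemma~\ref{lem_2ff}, force
\begin{align*}
f(1)=\alpha,\quad g(1)=\beta,\quad f_r(1)=\frac{\eta\,h(1)}{\alpha},\quad g_r(1)=\frac{\theta\,h(1)}{\beta},
\end{align*}
while the value $h(1)$ is pinned down by feeding these into \eqref{constraint_eq} evaluated at $r=1$; assumption \eqref{assu} is exactly what makes the resulting expression $h^2(1)=\dfrac{\alpha^2\beta^2}{2\eta\theta+\beta^2\phi(1)+\alpha^2\psi(1)}$ positive, so $h(1)>0$ is a legitimate initial value. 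Having fixed $(f(1),g(1),f_r(1),g_r(1))$, I would then solve for $f$ and $g$ from the first two equations of \eqref{main_ODE_sys}, in which $h$ is substituted by the function of $(f,g,f_r,g_r)$ obtained by solving \eqref{constraint_eq} for $h^2$. This turns the first two equations of \eqref{main_ODE_sys} into a second-order ODE system for $(f,g)$ with a right-hand side that is smooth in a neighborhood of the initial data (using $\sigma\neq0$, $f>0$, $g>0$ near $r=1$). Standard ODE existence gives a smooth solution on some $(1-\epsilon_0,1]$, and shrinking $\epsilon_0$ keeps $f,g,h$ positive and $\sigma$ away from zero throughout.

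Finally I would verify that the metric $G=f^2\,d\lambda\otimes d\lambda+g^2\,d\mu\otimes d\mu+h^2\,dr\otimes dr$ actually solves $\Ric(G)=T$. By construction it satisfies the first two equations of \eqref{main_ODE_sys}; for the third, I would differentiate the constraint \eqref{constraint_eq}—which holds identically in $r$ because that is how $h$ was defined—and combine it with the first two equations to recover the third equation of \eqref{main_ODE_sys}. In other words, the constraint is a first integral compatible with the reduced system, so all three equations hold simultaneously. The boundary conditions $G_{\partial\mathcal T}=R$ and $\II(G)=S$ are immediate from the choice of initial data together with \eqref{ind_metric} and Lemma~\ref{lem_2ff}. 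I expect the main obstacle to be the bookkeeping in the reduction step: one must check that solving \eqref{constraint_eq} for $h$ yields a smooth function of the phase variables near the initial point (this needs $\sigma-2f_rg_r/(fg)$ to be nonzero there, equivalently the right-hand side of \eqref{constraint_eq} before dividing—guaranteed by $\sigma(1)\neq0$ and smallness of $f_r(1),g_r(1)$ after shrinking, or more robustly by writing $h^2=\sigma^{-1}\big(\cdots\big)$ only when the numerator is controlled), and that the differentiated constraint really does reproduce the third equation rather than some weaker consequence. Everything else is routine ODE theory plus the computational lemmas already established.
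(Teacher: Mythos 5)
Your reduction hinges on solving the first-order constraint
\begin{align*}
\frac{2f_rg_r}{fg}+h^2\Big(\frac{\phi}{f^2}+\frac{\psi}{g^2}\Big)=\sigma
\end{align*}
for $h^2$ as a function of $(r,f,g,f_r,g_r)$, and that step fails for data that the theorem allows. The coefficient of $h^2$ is $\phi/f^2+\psi/g^2$, so you need $\beta^2\phi(1)+\alpha^2\psi(1)\ne0$; this is not implied by the hypotheses, which only give $\sigma(1)\ne0$ and~\eqref{assu}. Take $\phi\equiv\psi\equiv0$, $\sigma\equiv1$, $\alpha=\beta=\eta=\theta=1$: then \eqref{assu} holds, yet the constraint reads $2f_rg_r/(fg)=\sigma$ and contains no $h$ at all, so $h$ cannot be recovered from the phase variables. (Your own nondegeneracy check is aimed at the wrong quantity: $\sigma-2f_rg_r/(fg)$ is the numerator of the would-be formula for $h^2$, not the coefficient you must divide by; and $f_r(1)=\eta h(1)/\alpha$ is dictated by the boundary data, so it cannot be made small ``after shrinking''.) A second, unaddressed difficulty is that even where the constraint does determine $h$, the first two equations of \eqref{main_ODE_sys} contain $h_r$; differentiating your expression for $h^2$ along solutions reintroduces $f_{rr}$ and $g_{rr}$, so the reduced system is implicit in the second derivatives and you would still have to invert a $2\times2$ matrix whose invertibility is never checked.

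The paper's proof avoids both problems by a DeTurck-type gauge fixing rather than an algebraic elimination: it introduces an auxiliary radial reparametrization $\hat h_r$ as a new unknown and converts the constraint into the explicit second-order equation for $\hat h$ in \eqref{equ_nondeg2}, whose right-hand side is $\sqrt{\sigma^{-1}(\cdots)}$ --- this is precisely where $\sigma(1)\ne0$ and \eqref{assu} enter, to make the radicand well defined and positive at $r=1$. The Picard--Lindel\"of theorem then applies to the full explicit system \eqref{equ_nondeg2} with terminal data \eqref{term_cond_nondeg1}, and the solution is pulled back by the diffeomorphism $\Theta$ generated by $\hat h_r$. Your observation that the constraint together with the first two equations of \eqref{main_ODE_sys} is equivalent to the full system is correct (and requires no differentiation of the constraint), and your boundary data at $r=1$, including the formula for $h^2(1)$, are right; but the elimination of $h$ as you propose it does not go through under the stated hypotheses.
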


\begin{proof}
The argument we use may be viewed as a variant of D.~DeTurck's
argument from~\cite[Chapter~5]{AB87}. We elaborate on this in
Remark~\ref{rem_DeTurck_exi} below. Meanwhile, our goal is to find a
type $\mathcal A$ metric $G$ near $o\in\partial M$ such that
$\Ric(G)=T$, $G_{\partial M}=R$, and $\II(G)=S$. In order to do so,
we first consider the system
\begin{align}\label{equ_nondeg2}
\hat h''&=\sqrt{\frac1{\sigma\circ\hat h'}\sum_{j=1}^{n-1}\Bigg(\frac{\hat h^2}{\hat f_j^2}(\phi_j\circ\hat h')
+2\sum_{k=j+1}^{n-1}\frac{\hat f_k'\hat f_j'}{\hat f_k\hat f_j}\Bigg)}\,,\notag\\
\hat f_i''&=\frac{\hat h^2}{\hat f_i}\Biggl(-\phi_i\circ \hat
h'+\frac{\hat h'\hat f_i\hat f_i'}{\hat h^3}+\frac{(\hat
f_i')^2}{\hat h^2} -\sum_{j=1}^{n-1}\frac{\hat f_i\hat f_j'\hat
f_i'}{\hat h^2\hat f_j}\Biggr).
\end{align}
Here, $i$ takes the values $1,\ldots,n-1$, while the unknown $\hat
h$ and $\hat f_1,\ldots,\hat f_{n-1}$ are real-valued functions of
the parameter $r\in[0,x)$. We impose the initial conditions by
requiring that
\begin{align}\label{term_cond_nondeg1}
\hat h(0)=1,~\hat f_i(0)=\alpha_i,~\hat h'(0)=0,~\hat
f_i'(0)=-\frac{\eta_i}{\alpha_i}\,,\qquad i=1,\ldots,n-1.
\end{align}
These conditions, together with~\eqref{assu}, ensure that the
right-hand sides of equations~\eqref{equ_nondeg2} are well-defined
when $r=0$. Further, the expression under the square root symbol is
positive at $r=0$. Employing the standard Picard-Lindel\"of
existence theory for ordinary differential equations, one can
demonstrate that
problem~\eqref{equ_nondeg2}--\eqref{term_cond_nondeg1} has a
solution on the interval $[0,\epsilon]$ for some $\epsilon\in(0,x)$.
More specifically, there are smooth functions $\hat h$ and $\hat
f_1,\ldots,\hat f_{n-1}$ on $[0,\epsilon]$ such that
formulas~\eqref{equ_nondeg2} hold on $[0,\epsilon]$ and
formulas~\eqref{term_cond_nondeg1} hold as well. In particular,
these functions are not~0 on $[0,\epsilon]$. The values of $\hat h'$
on $[0,\epsilon]$ lie in $[0,x)$. Note that $\hat h$ and $\hat
f_1,\ldots,\hat f_{n-1}$ are all positive at $r=0$. The same can be
said about the second derivative $\hat h''$. Therefore, $\hat h$ and
$\hat f_1,\ldots,\hat f_{n-1}$ are positive on $[0,\epsilon]$, and
we may assume that $\epsilon$ is small enough to guarantee that
$\hat h''$ is positive on $[0,\epsilon]$.

Let us proceed to construct $G$ near $o\in\partial M$ such that
$\Ric(G)=T$, $G_{\partial M}=R$, and $\II(G)=S$. With the functions
$\hat h$ and $\hat f_1,\ldots,\hat f_{n-1}$ at hand, we introduce
the metric $\hat G$ on $U_\epsilon$ according to the formula
\begin{align*}
\hat G=\hat h^2(r)\,dr\otimes dr+\sum_{i=1}^{n-1}\hat
f_i^2(r)\,dq_i\otimes dq_i,\qquad r\in[0,\epsilon).
\end{align*}
Denote $\epsilon_0=\hat h'(\epsilon)$. Consider the map
$\Theta:U_\epsilon\to U_{\epsilon_0}$ given in our coordinates by
\begin{align*}
\Theta\big((r,q_1,\ldots,q_{n-1})\big)=\big(\hat
h'(r),q_1,\ldots,q_{n-1}\big),\qquad
r\in[0,\epsilon),~q_1,\ldots,q_{n-1}\in\mathbb R.
\end{align*}
Since $\hat h''$ is positive on $[0,\epsilon)$, this map is a
diffeomorphism. We set $G=(\Theta^{-1})^*\hat G$, where the asterisk
designates pullback. It is clear that $G$ is a type $\mathcal A$
metric on $U_{\epsilon_0}$. To complete the proof, we need to show
that $\Ric(G)=T$, $G_{\partial M}=R$, and $\II(G)=S$.

Formulas~\eqref{equ_nondeg2} imply
\begin{align*}
\sum_{j=1}^{n-1}\biggl(-\frac{\hat f_j''}{\hat f_j}+\frac{\hat
h'\hat
f_j'}{\hat h\hat f_j}\biggr)&=(\hat h'')^2(\sigma\circ\hat h'),\notag\\
-\frac{\hat f_i\hat f_i''}{\hat h^2}+\frac{\hat h'\hat f_i\hat
f_i'}{\hat h^3}+\frac{(\hat f_i')^2}{\hat h^2}
-\sum_{j=1}^{n-1}\frac{\hat f_i\hat f_j'\hat f_i'}{\hat h^2\hat
f_j}&=\phi_i\circ \hat h',\qquad i=1,\ldots,n-1,
\end{align*}
when the variable $r$ takes values in $[0,\epsilon)$. Consequently,
in view of Lemma~\ref{lem_Ricci_curv}, the equality $\Ric(\hat
G)=\Theta^*T$ holds on $U_\epsilon$. We use this equality to
conclude that
\begin{align*}
\Ric(G)&=\Ric\big((\Theta^{-1})^*\hat
G\big)=(\Theta^{-1})^*\Ric(\hat G)=(\Theta^{-1})^*\Theta^*T=T
\end{align*}
on $U_{\epsilon_0}$. It remains to study the behavior of $G$ near
the boundary.

Due to conditions~\eqref{term_cond_nondeg1}, the metric $\hat
G_{\partial M}$ induced by $\hat G$ on $U\cap\partial M$ coincides
with~$R$. Employing~\eqref{term_cond_nondeg1} and
Lemma~\ref{lem_2ff}, we can also establish that $\II(\hat G)$ equals
$S$. Besides, the restriction of $\Theta$ to $U\cap\partial M$ is
the identity map. These facts imply that
\begin{align*}
G_{\partial M}&=(\Theta^{-1})^*\hat G_{\partial M}=\hat G_{\partial
M}=R,
\\ \II(G)&=\II\big((\Theta^{-1})^*\hat G\big)=(\Theta^{-1})^*\II(\hat
G)=\II(\hat G)=S.
\end{align*}
Thus, we have verified all the required properties of $G$.
\end{proof}

\begin{remark}
In the proof of Theorem~\ref{thm_loc_ex}, the components of $G$ can
be expressed through the components of $\hat G$. More specifically,
if $G$ is given by~\eqref{rot_sym_metric}, one easily sees that
\begin{align}\label{expl_nohat_hat}
h=\frac{\hat h}{\hat h''}\circ(\hat h')^{-1},~f_i=\hat f_i\circ(\hat
h')^{-1},\qquad i=1,\ldots,n-1.
\end{align}
When carrying out the proof, we derived the equalities $\Ric(G)=T$,
$G_{\partial M}=R$, and $\II(G)=S$ from the formula
$G=(\Theta^{-1})^*\hat G$. Alternatively, one could verify them
using Lemmas~\ref{lem_Ricci_curv} and~\ref{lem_2ff} together
with~\eqref{expl_nohat_hat}.
\end{remark}

\begin{remark}\label{rem_DeTurck_exi}
Chapter~5 of the book~\cite{AB87} describes a method, due to
D.~DeTurck, for obtaining Riemannian metrics with prescribed Ricci
curvature on a neighborhood of an interior point of a manifold. Our
proof of Theorem~\ref{thm_loc_ex} may be interpreted as an
implementation of that method in the framework of ordinary
differential equations.
\end{remark}

The next result establishes the uniqueness of $G$. Roughly speaking,
we will demonstrate that any type $\mathcal A$ metric $\tilde G$
near $o\in \partial M$ with the same Ricci curvature and the same
boundary behavior as $G$ must coincide with $G$. Note that the
paper~\cite{MAMH08} discusses a closely related statement.

\begin{theorem}\label{thm_uniq}
Let $T$ be a tensor on $U_x$ given by~\eqref{rot_sym_tensor}.
Suppose $\sigma(r)\ne0$ whenever $r\in[0,x)$. Let $G$ and $\tilde G$
be type $\mathcal A$ metrics on $U_x$ satisfying the equality
\begin{align*}
\Ric(G)=\Ric(\tilde G)=T
\end{align*} on $U_x$ together
with the equalities $G_{\partial M}=\tilde G_{\partial M}$ and
$\II(G)=\II(\tilde G)$ on $U\cap\partial M$. Then $G$ coincides with
$\tilde G$.
\end{theorem}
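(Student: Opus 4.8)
The plan is to reduce the uniqueness statement to a uniqueness theorem for an initial/terminal value problem for ODEs. Writing $G$ and $\tilde G$ in the form \eqref{rot_sym_metric} with functions $f,g,h$ and $\tilde f,\tilde g,\tilde h$, the equation $\Ric(G)=\Ric(\tilde G)=T$ becomes the system \eqref{main_ODE_sys} for each triple. The immediate obstruction is that this system is \emph{underdetermined} in a subtle way: it is three second-order equations for three unknowns, but the third equation contains no second derivative of $h$, so it is really a first-order constraint rather than an evolution equation for $h$. Thus one cannot naively invoke Picard--Lindel\"of directly on \eqref{main_ODE_sys}. My approach mirrors the change of variables used in the proof of Theorem~\ref{thm_loc_ex}: introduce a new independent variable by integrating the $h$-component, so that $h$ is eliminated as a dynamical unknown and the remaining equations form a genuine well-posed ODE system.

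Concretely, first I would derive, exactly as in the proof of Theorem~\ref{prop_nec_cond}, the algebraic relation obtained by solving the first two equations of \eqref{main_ODE_sys} for $f_{rr},g_{rr}$ and substituting into the third; this gives
\begin{align*}
\frac{2f_rg_r}{fg}+\frac{h^2}{f^2}\,\phi+\frac{h^2}{g^2}\,\psi=\sigma,\qquad r\in(1-x,1],
\end{align*}
and since $\sigma(r)\ne0$ throughout $(1-x,1]$ this lets one solve for $h^2$ in terms of $f,g,f_r,g_r$ (the right combination is positive by the argument already used). Substituting that expression for $h$ back into the first two equations of \eqref{main_ODE_sys} yields a closed second-order system for $(f,g)$ alone, with smooth right-hand side on the open region where the relevant denominators and the quantity being square-rooted are nonzero. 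The boundary data $G_{\partial\mathcal T}=\tilde G_{\partial\mathcal T}$ and $\II(G)=\II(\tilde G)$, via \eqref{ind_metric} and Lemma~\ref{lem_2ff}, pin down $f(1)=\tilde f(1)$, $g(1)=\tilde g(1)$, and then $h(1)=\tilde h(1)$ (from the reconstructed-$h$ formula, or equivalently the Remark after Theorem~\ref{prop_nec_cond}), and finally $f_r(1)=\tilde f_r(1)$, $g_r(1)=\tilde g_r(1)$ from $\II(G)=\II(\tilde G)=S$. Hence $(f,g,f_r,g_r)$ and $(\tilde f,\tilde g,\tilde f_r,\tilde g_r)$ satisfy the same terminal conditions at $r=1$.

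Now I would apply the uniqueness part of the Picard--Lindel\"of theorem to the closed second-order system for $(f,g)$ with these common terminal data: it forces $f\equiv\tilde f$ and $g\equiv\tilde g$ on a neighborhood of $r=1$ in $(1-x,1]$, and then the reconstruction formula for $h$ gives $h\equiv\tilde h$ there, so $G=\tilde G$ near $\partial\mathcal T$. To upgrade from ``near $\partial\mathcal T$'' to ``on all of $\mathcal T_x$,'' I would run a standard connectedness/continuation argument: the set of $r\in(1-x,1]$ on which the two solution-tuples agree is nonempty and closed; and it is open because at any interior agreement point $r_0$ the hypothesis $\sigma(r_0)\ne 0$ guarantees the reconstructed $h$ is smooth and nonvanishing in a neighborhood, so the ODE system stays regular and Picard--Lindel\"of uniqueness propagates the agreement to both sides of $r_0$. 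Since $(1-x,1]$ is connected, agreement holds on all of $\mathcal T_x$.

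The main obstacle I anticipate is purely bookkeeping: one must check carefully that throughout $(1-x,1]$ the expressions appearing in the reduced system (in particular the denominator $\sigma$ and the quantity under the square root, i.e.\ $\tfrac{1}{\sigma}\big(\tfrac{2f_rg_r}{fg}+\tfrac{h^2}{f^2}\phi+\tfrac{h^2}{g^2}\psi\big)$, which equals $h^2$ and is automatically positive) remain in the open set where the right-hand side of the ODE is smooth, so that Picard--Lindel\"of applies and the continuation argument does not stall. This is exactly where the hypothesis $\sigma(r)\ne0$ for \emph{all} $r\in(1-x,1]$ — rather than merely $\sigma(1)\ne0$ as in Theorems~\ref{prop_nec_cond} and~\ref{thm_loc_ex} — is used. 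Everything else is the same differentiation-and-substitution computation already performed in the proofs above.
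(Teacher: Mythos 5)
Your overall strategy --- convert \eqref{main_ODE_sys} into a well-posed terminal value problem, invoke Picard--Lindel\"of uniqueness, and then continue the agreement across $(1-x,1]$ --- is the right one, and your continuation step is essentially the paper's $\sup$-argument. But the reduction you propose has a genuine gap. The constraint obtained by eliminating $f_{rr}$ and $g_{rr}$,
\begin{align*}
\frac{2f_rg_r}{fg}+\frac{h^2}{f^2}\,\phi+\frac{h^2}{g^2}\,\psi=\sigma,
\end{align*}
is linear in $h^2$ with coefficient $\frac{\phi}{f^2}+\frac{\psi}{g^2}$, so solving it for $h^2$ requires \emph{that} coefficient to be nonzero --- not $\sigma\ne0$. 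The hypothesis $\sigma(r)\ne0$ gives no control over $\frac{\phi}{f^2}+\frac{\psi}{g^2}$ (nothing rules out $\phi$ and $\psi$ both vanishing at some $r$, or the combination changing sign), so your reconstruction formula for $h$ can simply fail to exist. Moreover, even where that coefficient is nonzero, the ``closed second-order system for $(f,g)$ alone'' is not closed in the sense needed for Picard--Lindel\"of: the first two equations of \eqref{main_ODE_sys} contain $h_r$, and differentiating your algebraic expression for $h^2$ reintroduces $f_{rr}$ and $g_{rr}$. You are left with an implicit system in which the second derivatives occur on both sides, and you would still have to check that it can be solved for $(f_{rr},g_{rr})$.

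The paper circumvents both difficulties by fixing a gauge in the independent variable instead of eliminating $h$ algebraically: one reparametrizes $r$ via the auxiliary ODE \eqref{ODE_harm1} (resp.\ \eqref{ODE_har2} for $\tilde G$) and pulls $G$ back by the resulting diffeomorphism, so that in the new variable the constraint becomes the \emph{explicit} second-order equation
\begin{align*}
\hat h_{rr}=\sqrt{\frac1{\sigma\circ\hat h_r}\left(\frac{2\hat f_r\hat g_r}{\hat f\hat g}+\frac{\hat h^2}{\hat f^2}(\phi\circ\hat h_r)+\frac{\hat h^2}{\hat g^2}(\psi\circ\hat h_r)\right)}\,,
\end{align*}
i.e.\ the third line of \eqref{big_ODE_hat}. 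There the only division is by $\sigma$, which is exactly what the hypothesis guarantees, and \eqref{big_ODE_hat} is an honest explicit second-order system for $(\hat f,\hat g,\hat h)$ to which Picard--Lindel\"of applies. Both reparametrized metrics satisfy the same terminal data, hence coincide, and undoing the two diffeomorphisms yields $f=\tilde f$, $g=\tilde g$, $h=\tilde h$ on a slightly larger collar; this is Hamilton's Ricci-flow uniqueness trick transplanted to ODEs. To repair your argument you would either have to add a hypothesis forcing $\frac{\phi}{f^2}+\frac{\psi}{g^2}\ne0$ and then resolve the implicit system, or adopt the reparametrization.
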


\begin{proof}
The argument we use was inspired by an argument proposed by
R.~Hamilton to establish the uniqueness of solutions to the Ricci
flow on a closed manifold. We will say more about this in
Remark~\ref{rem_Hamilton_uniq} below. Meanwhile, let us consider the
set
\begin{align*}
\Omega=\{0\}\cup\{y\in(0,x)\,|\,G=\tilde G~\mbox{on}~U_y\}
\end{align*} and denote
$y_0=\sup\Omega$. The proof of the theorem will be complete if we
show that $y_0$ equals $x$. Assume this is not the case. Then $y_0$
must be less than $x$. We will now use this fact to obtain a
contradiction. Our plan is to demonstrate that $G=\tilde G$ on
$U_{y_0+\delta_0}$ for some $\delta_0\in(0,x-y_0)$. This would
contradict the definition of~$y_0$.

Suppose $G$ has the form~\eqref{rot_sym_metric}. Consider the
ordinary differential equation
\begin{align}
\label{ODE_harm1}\hat h''=\frac{\hat h}{h\circ\hat h'}
\end{align}
on the interval $[y_0,x)$ for the unknown function $\hat h$. We
impose the initial conditions
\begin{align}\label{TC_harm1}
\hat h(y_0)=1,~\hat h'(y_0)=y_0.
\end{align}
Employing the standard theory of ordinary differential equations,
one can show that problem~\eqref{ODE_harm1}--\eqref{TC_harm1} has a
unique solution $\hat h$ on $[y_0,y_0+\delta]$ for some
$\delta\in(0,x-y_0)$. The reasoning one should use is the same as
the reasoning one used in the proof of Theorem~\ref{thm_loc_ex} to
deal with~\eqref{equ_nondeg2}--\eqref{term_cond_nondeg1}. The
solution $\hat h$ is smooth on $[y_0,y_0+\delta]$. The values of
$\hat h'$ lie in $[0,x)$. Also, $\hat h$ and $\hat h''$ are
positive. Let us denote $\hat f_i=f_i\circ \hat h'$ for
$i=1,\ldots,n-1$. These functions will help us show that $G=\tilde
G$ on an appropriate neighborhood of~$o\in\partial M$.

Set $\delta_1=\hat h'(y_0+\delta)-y_0$. Clearly,
$\delta_1\in(0,x-y_0)$. Our next step is to introduce a map $\Sigma$
acting from $U_{y_0+\delta}\setminus U_{y_0}$ to
$U_{y_0+\delta_1}\setminus U_{y_0}$ if $y_0\ne0$ and from
$U_{\delta}$ to $U_{\delta_1}$ if $y_0=0$. Define $\Sigma$ by the
equality
\begin{align*}
\Sigma\big((r,q_1,\ldots,q_{n-1})\big)&=\big(\hat
h'(r),q_1,\ldots,q_{n-1}\big), \qquad
r\in[y_0,y_0+\delta),~q_1,\ldots,q_{n-1}\in\mathbb R,
\end{align*}
employing our coordinates. It is easy to see that $\Sigma$ is a
diffeomorphism. Next we consider the metric $\hat G=\Sigma^*G$ (the
asterisk stands for pullback). Evidently, we can write this metric
in the form
\begin{align*}
\hat G=\hat h^2(r)\,dr\otimes dr+\sum_{i=1}^{n-1}\hat
f_i^2(r)\,dq_i\otimes dq_i,\qquad r\in[y_0,y_0+\delta).
\end{align*}
The equation $\Ric(\hat G)=\Sigma^*T$ holds. Together with
Lemma~\ref{lem_Ricci_curv} and the fact that $\hat h''$ must be
greater than~0 on $[y_0,y_0+\delta]$, this equation implies
\begin{align}\label{big_ODE_hat}
\hat h''&=\sqrt{\frac1{\sigma\circ\hat
h'}\sum_{j=1}^{n-1}\Bigg(\frac{\hat h^2}{\hat f_j^2}(\phi_j\circ\hat
h')
+2\sum_{k=j+1}^{n-1}\frac{\hat f_k'\hat f_j'}{\hat f_k\hat f_j}\Bigg)}\,,\notag\\
\hat f_i''&=\frac{\hat h^2}{\hat f_i}\Biggl(-\phi_i\circ \hat
h'+\frac{\hat h'\hat f_i\hat f_i'}{\hat h^3}+\frac{(\hat
f_i')^2}{\hat h^2} -\sum_{j=1}^{n-1}\frac{\hat f_i\hat f_j'\hat
f_i'}{\hat h^2\hat f_j}\Biggr),\qquad i=1,\ldots,n-1,
\end{align}
when the variable $r$ takes values in $[y_0,y_0+\delta)$. Also, it
is not difficult to understand that
\begin{align}\label{big_TC_hat}
\hat h(y_0)=1,~\hat f_i(y_0)=f_i(y_0),~\hat h'(y_0)=y_0,~\hat
f_i'(y_0)=\frac{f_i'(y_0)}{h(y_0)}\,,\qquad i=1,\ldots,n-1.
\end{align}
Roughly speaking, the initial-value
problem~\eqref{big_ODE_hat}--\eqref{big_TC_hat} enjoys the
uniqueness of solutions. We will use this to show that $G=\tilde G$
on $U_{y_0+\delta_0}$ for some $\delta_0\in(0,x-y_0)$.

Suppose the metric $\tilde G$ has the form
\begin{align*}
\tilde G=\tilde h^2(r)\,dr\otimes dr+\sum_{i=1}^{n-1}\tilde
f_i^2(r)\,dq_i\otimes dq_i,\qquad r\in[0,x),
\end{align*}
where $\tilde h$ and $\tilde f_1,\ldots,\tilde f_{n-1}$ are positive
functions on $[0,x)$. By analogy with~\eqref{ODE_harm1}, let us
consider the equation
\begin{align}\label{ODE_har2}
\check h''&=\frac{\check h}{\tilde h\circ\check h'}
\end{align}
on $[y_0,x)$ for the unknown $\check h$. As in~\eqref{TC_harm1}, we
demand that
\begin{align}\label{TC_har2}
\check h(y_0)=1,~\check h'(y_0)=y_0.
\end{align}
Problem~\eqref{ODE_har2}--\eqref{TC_har2} has a unique solution
$\check h$ on $[y_0,y_0+\tilde\delta]$ for some
$\tilde\delta\in(0,x-y_0)$. Let us define $\check f_i=\tilde
f_i\circ\check h'$ with $i=1,\ldots,n-1$. Arguing as above, we find
equations for $\check h''$ and $\check f_1'',\ldots,\check
f_{n-1}''$ in terms of $\check h$ and $\check f_1,\ldots,\check
f_{n-1}$, the derivatives $\check h'$ and $\check f_1',\ldots,\check
f_{n-1}'$, and the functions $\sigma$ and
$\phi_1,\ldots,\phi_{n-1}$. The next step is to invoke the
definition of $y_0$ and the assumptions of the theorem. They imply
\begin{align*}
\check h(y_0)&=1,~\check f_i(y_0)=\tilde f_i(y_0)=f_i(y_0), \notag \\
\check h'(y_0)&=y_0,~\check f_i'(y_0)=\frac{\tilde f_i'(y_0)}{\tilde
h(y_0)}=\frac{f_i'(y_0)}{h(y_0)}\,,\qquad i=1,\ldots,n-1. \notag
\end{align*}
We conclude that formulas~\eqref{big_ODE_hat}--\eqref{big_TC_hat}
will hold on $[y_0,y_0+\tilde\delta)$ if we replace $\hat h$ and
$\hat f_1,\ldots,\hat f_{n-1}$ (as well as their derivatives) in
these formulas by $\check h$ and~$\check f_1,\ldots,\check f_{n-1}$
(and their corresponding derivatives). This enables us to prove,
with the aid of the standard uniqueness results for ordinary
differential equations, that $\hat h=\check h$ on
$[y_0,y_0+\min\{\delta,\tilde\delta\}]$ and $\hat f_i=\check f_i$ on
that interval for all $i=1,\ldots,n-1$. Consequently, we have
\begin{align*}
h&=\frac{\hat h}{\hat h''}\circ(\hat h')^{-1}=\frac{\check h}{\check
h''}\circ(\check h')^{-1}=\tilde h, \\
f_i&=\hat f_i\circ(\hat h')^{-1}=\check f_i\circ(\check
h')^{-1}=\tilde f_i, \qquad i=1,\ldots,n-1,
\end{align*}
when the variable $r$ takes values in $[y_0,y_0+\delta_0)$ with the
number $\delta_0\in(0,x-y_0)$ given by
\begin{align*}
\delta_0=\hat h'(y_0+\min\{\delta,\tilde\delta\})-y_0=\check
h'(y_0+\min\{\delta,\tilde\delta\})-y_0.
\end{align*}
It follows that $G=\tilde G$ on $U_{y_0+\delta_0}$. But this
contradicts the definition of $y_0$. Hence $y_0$ equals $x$, and $G$
coincides with $\tilde G$.
\end{proof}

\begin{remark}\label{rem_Hamilton_uniq}
Our proof of Theorem~\ref{thm_uniq} may be interpreted as an
adaptation of R.~Hamilton's proof from~\cite{RH95} (see also,
e.g.,~\cite[Chapter~3]{BCDK04}) of the uniqueness of solutions to
the Ricci flow on a closed manifold. Let us comment on one aspect of
this interpretation. R.~Hamilton's proof employed two harmonic map
heat flows. The counterparts of these flows in our argument are the
ordinary differential equations~\eqref{ODE_harm1}
and~\eqref{ODE_har2}.
\end{remark}

One more remark is in order. It concerns all the results in
Section~\ref{sec_prescribed}, not just Theorem~\ref{thm_uniq}.

\begin{remark}\label{rem_cohom1}
Let the manifold $M$ be connected and acted upon by a compact Lie
group $\mathcal G$ with cohomogeneity one; see, for
example,~\cite{CB98,ADMW00,JHEMW00}. Assume that the isotropy
representation of the principal orbit type of $M$ splits into
pairwise inequivalent irreducible summands. This assumption is quite
natural; cf., for instance,~\cite{ADMW00}. The questions of the
existence and the uniqueness of solutions to the prescribed Ricci
curvature equation in the class of $\mathcal G$-invariant Riemannian
metrics near a principal orbit reduce to the investigation of a
system resembling~\eqref{main_ODE_sys}. It seems that, following the
arguments above, one can obtain analogues of
Theorems~\ref{prop_nec_cond}, \ref{thm_loc_ex}, and~\ref{thm_uniq}
for such metrics.
\end{remark}

We mentioned briefly in the introduction that some of the material
in~\cite{JCDDT94} was related to the results in the present paper.
Let us elaborate on this. Consider the standard action of the
orthogonal group $SO(m)$ on the Euclidean space $\mathbb R^m$ for
some $m\ge3$. Along with other things, the work~\cite{JCDDT94}
studied the existence and the uniqueness of solutions to the
prescribed Ricci curvature equation among $SO(m)$-invariant
Riemannian metrics on $\mathbb R^m$; cf. Remark~\ref{rem_cohom1}
above. It is possible to show that such metrics are conformally
flat. The proofs of many of the statements in~\cite{JCDDT94}
exploited this fact.

\section{The case of a solid torus}\label{sec_prescr_torus}

We illustrate below how the results of Section~\ref{sec_prescribed}
can provide information about rotationally symmetric metrics near
the boundary of a solid torus. Let us begin with some notation and
some background. Consider the unit disk $D^2=\{(w,y)\in\mathbb
R^2\,|\,w^2+y^2\le1\}$ and the unit circle $S^1=\{(w,y)\in\mathbb
R^2\,|\,w^2+y^2=1\}$ in $\mathbb R^2$. We will work with the solid
torus
\begin{align*}
\mathcal T=D^2\times S^1\subset\mathbb R^2\times\mathbb
R^2.\end{align*} The boundary of $\mathcal T$ will be denoted by
$\partial\mathcal T$. The set $\mathcal C=\{(0,0)\}\times
S^1\subset\mathcal T$ is the \emph{core circle} of~$\mathcal T$. We
employ cylindrical coordinates $(\gamma,\lambda,\mu)$ on $\mathcal
T$; cf., for instance,~\cite{JDDKAY10}. The parameter $\gamma$ takes
values in the interval $[0,1]$, while the parameters $\lambda$ and
$\mu$ takes values in $(0,1]$. The solid torus $\mathcal T$ is a
subset of $\mathbb R^2\times \mathbb R^2$, and the point
$(\gamma,\lambda,\mu)\in\mathcal T$ coincides with the point
$\big((w_1,y_1),(w_2,y_2)\big)\in\mathbb R^2\times \mathbb R^2$ such
that
\begin{align*}
w_{1}&=\gamma\cos(2\pi\lambda),~y_{1}=\gamma\sin(2\pi\lambda), \\
w_{2}&=\cos(2\pi\mu),~y_{2}=\sin(2\pi\mu).
\end{align*}
Our next step is to discuss two types of rotations of $\mathcal T$.
This will help us describe the tensors we will deal with in the
sequel.

For every $\lambda_0\in\mathbb (0,1]$, consider the map $\mathcal
V_{\lambda_0}:\mathcal T\to\mathcal T$ defined in cylindrical
coordinates by the formula
\begin{align*}
\mathcal
V_{\lambda_0}\big((\gamma,\lambda,\mu)\big)=\begin{cases}(\gamma,\lambda+\lambda_{0},\mu),&\mbox{if}~\lambda+\lambda_{0}\le1,
\\ (\gamma,\lambda+\lambda_{0}-1,\mu),&\mbox{if}~\lambda+\lambda_{0}>1.\end{cases}
\end{align*}
Intuitively, one may view $\mathcal V_{\lambda_0}$ as the rotation
of $\mathcal T$ by the angle $2\pi\lambda_0$ around the core circle
$\mathcal C$. It is easy to see that $\mathcal C$ remains fixed
under $\mathcal V_{\lambda_0}$. For each $\mu_0\in(0,1]$, consider
the map $\mathcal W_{\mu_0}:\mathcal T\to\mathcal T$ given by the
formula
\begin{align*}
\mathcal
W_{\mu_0}\big((\gamma,\lambda,\mu)\big)=\begin{cases}(\gamma,\lambda,\mu+\mu_0),&\mbox{if}~\mu+\mu_{0}\le1,
\\ (\gamma,\lambda,\mu+\mu_{0}-1),&\mbox{if}~\mu+\mu_{0}>1.\end{cases}
\end{align*}
If one visualizes $\mathcal T$ as a ``doughnut" in $\mathbb R^3$,
one may think of $\mathcal W_{\mu_0}$ as the rotation of $\mathcal
T$ by the angle $2\pi\mu_0$ around the axis passing through the
center of $\mathcal C$ perpendicular to the plane containing
$\mathcal C$. The set of fixed points of $\mathcal W_{\mu_0}$ is
empty unless $\mu_0=1$. In this section, we focus on rotationally
symmetric tensors on neighborhoods of $\partial\mathcal T$. By
definition, such tensors possess two properties. Firstly, they are
diagonal in the cylindrical coordinates $(\gamma,\lambda,\mu)$.
Secondly, they do not change when pulled back by $\mathcal
V_{\lambda_0}$ and $\mathcal W_{\mu_0}$ for any $\lambda_0\in(0,1]$
and $\mu_0\in(0,1]$. Rotationally symmetric tensors on neighborhoods
of $\partial\mathcal T$ admit a simple characterization. Roughly
speaking, their components in the coordinates $(\gamma,\lambda,\mu)$
do not depend on the parameters $\lambda$ and $\mu$. In the next
paragraph, we will talk about rotationally symmetric tensors near
$\partial\mathcal T$ in a more rigorous fashion. But before doing
so, we have to introduce one more piece of notation.

Fix $x\in(0,1]$. Let $\mathcal T_x$ stand for the set of points
$(\gamma,\lambda,\mu)$ in $\mathcal T$ such that $\gamma\in(1-x,1]$.
Thus, $\mathcal T_x$ is a neighborhood of~$\partial\mathcal T$ in
$\mathcal T$. We call a smooth (0,2)-tensor $T$ on $\mathcal T_x$ a
\emph{rotationally symmetric tensor on $\mathcal T_x$} if it is
given in cylindrical coordinates by the formula
\begin{align}\label{torus_t}
T=\zeta(\gamma)\,d\gamma\otimes
d\gamma+\psi_1(\gamma)\,d\lambda\otimes
d\lambda+\psi_2(\gamma)\,d\mu\otimes d\mu,\qquad \gamma\in(1-x,1].
\end{align}
On the right-hand side, $\zeta$ and $\psi_1,\psi_2$ have to be
functions from $(1-x,1]$ to $\mathbb R$. Granted that $T$
satisfies~\eqref{torus_t}, the equalities $\mathcal
V_{\lambda_0}^*T=T$ and $\mathcal W_{\mu_0}^*T=T$ hold for all
$\lambda_0\in(0,1]$ and $\mu_0\in(0,1]$. The asterisks in them
designate pullback. One easily sees that, when $T$
obeys~\eqref{torus_t}, it is a type $\mathcal A$ tensor on
appropriately chosen neighborhoods in $\mathcal T$ with appropriate
coordinate systems. A smooth Riemannian metric $G$ on $\mathcal T_x$
is a \emph{rotationally symmetric metric on $\mathcal T_x$} if
\begin{align}\label{torus_G}
G=p^2(\gamma)\,d\gamma\otimes d\gamma+g_1^2(\gamma)\,d\lambda\otimes
d\lambda+g_2^2(\gamma)\,d\mu\otimes d\mu,\qquad \gamma\in(1-x,1],
\end{align}
for functions $p$ and $g_1,g_2$ from $(1-x,1]$ to $(0,\infty)$. If
$G$ satisfies~\eqref{torus_G}, then it is a type $\mathcal A$ metric
on certain neighborhoods in $\mathcal T$ equipped with proper
coordinates. We write $\Ric(G)$ for the Ricci curvature of $G$. The
notation $G_{\partial\mathcal T}$ stands for the metric induced by
$G$ on $\partial\mathcal T$. Finally, $\II(G)$ is the second
fundamental form of $\partial\mathcal T$ computed in $G$ with
respect to the outward unit normal. Our goal is to show how the
results of Section~\ref{sec_prescribed} can help establish the
existence and the uniqueness of solutions to the prescribed Ricci
curvature equation in the class of rotationally symmetric metrics
near $\partial\mathcal T$. Only a little more preparation is
required at this point.

Assume $\beta_1,\beta_2$ and $\theta_1,\theta_2$ are real numbers
with $\beta_1,\beta_2>0$. Consider the tensors $R$ and $S$ on
$\partial\mathcal T$ defined by the equalities
\begin{align}\label{torus_RS}
R=\beta_1^2\,d\lambda\otimes d\lambda+\beta_2^2\,d\mu\otimes d\mu,
\qquad S=\theta_1\,d\lambda\otimes d\lambda+\theta_2\,d\mu\otimes
d\mu.
\end{align}
Applying Theorems~\ref{prop_nec_cond}, \ref{thm_loc_ex},
and~\ref{thm_uniq}, we arrive at the following conclusions.

\begin{proposition}\label{prop_torus}
Suppose $T$ is a smooth tensor on $\mathcal T_x$ given by
formula~\eqref{torus_t}. Assume $\zeta(1)$ is not equal to~$0$. Let
$R$ and $S$ be tensors on $\partial\mathcal T$ given
by~\eqref{torus_RS}. Then the following statements are equivalent:
\begin{enumerate}
\item
For some $\epsilon_0\in(0,x)$, there exists a rotationally symmetric
metric $G$ on $\mathcal T_{\epsilon_0}$ satisfying the equation
$\Ric(G)=T$ on $\mathcal T_{\epsilon_0}$ together with the boundary
conditions $G_{\partial\mathcal T}=R$ and $\II(G)=S$
on~$\partial\mathcal T$.
\item
The quantity
\begin{align*}
\frac1{\zeta(1)}\,\left(\frac1{\beta_1^2}\psi_1(1)+\frac1{\beta_2^2}\psi_2(1)+2\frac{\theta_1\theta_2}{\beta_1^2\beta_2^2}\right)
\end{align*}
is positive.
\end{enumerate}
If the above statements hold and $\zeta(\gamma)\ne0$ for
$\gamma\in(1-\epsilon_0,1]$, then the rotationally symmetric metric
$G$ satisfying $\Ric(G)=T$, $G_{\partial\mathcal T}=R$, and
$\II(G)=S$ is unique on $\mathcal T_{\epsilon_0}$.
\end{proposition}

We end this section with a remark concerning the solvability of the
prescribed Ricci curvature equation on all of $\mathcal T$.
Generally speaking, this is a complicated matter. We will not
discuss it thoroughly.

\begin{remark}\label{rem_global}
Suppose $\breve T$ is a smooth positive-definite (0,2)-tensor on
$\mathcal T$ such that the restriction of $\breve T$ to~$\mathcal
T_x$ is a rotationally symmetric tensor on $\mathcal T_x$. Assume
$\theta_1,\theta_2>0$. Let $S$ be given by the second formula
in~\eqref{torus_RS}. Then, for some $\breve\epsilon_0\in(0,x)$,
there exists a rotationally symmetric metric $\breve G$ on $\mathcal
T_{\breve\epsilon_0}$ such that $\Ric(\breve G)=\breve T$
on~$\mathcal T_{\breve\epsilon_0}$ and $\II(\breve G)=S$ on
$\partial\mathcal T$. This is an easy consequence of
Proposition~\ref{prop_torus}. However, there is no smooth Riemannian
metric on all of $\mathcal T$ such that its Ricci curvature equals
$\breve T$ and the second fundamental form of~$\partial\mathcal T$
in this metric with respect to the outward unit normal equals $S$.
This follows from Theorem~2 in~\cite{HW99}.
\end{remark}

\section{The Einstein equation on a solid torus}\label{sec_Einstein}

In this section, we briefly discuss the Einstein equation on the
solid torus $\mathcal T$. Let us first state a definition. We say
that a smooth Riemannian metric $G$ on $\mathcal T$ is a
\emph{rotationally symmetric metric on $\mathcal T$} if
\begin{align}\label{rot_all_T}
G=p^2(\gamma)\,d\gamma\otimes d\gamma+g_1^2(\gamma)\,d\lambda\otimes
d\lambda+g_2^2(\gamma)\,d\mu\otimes d\mu,\qquad \gamma\in(0,1],
\end{align}
in our cylindrical coordinates. Here, $p$ and $g_1,g_2$ have to be
functions from $(0,1]$ to $(0,\infty)$. The solutions of the
Einstein equation in the class of rotationally symmetric metrics on
$\mathcal T$ can be found explicitly through a simple computation.
We write down these solutions below. Before doing so, however, we
need to make a few preparatory comments.

Suppose $G$ is a rotationally symmetric metric on $\mathcal T$ and
equality~\eqref{rot_all_T} is satisfied. It is well known that the
function $g_1$ must then admit a smooth odd extension to $[-1,1]$.
We will preserve the notation $g_1$ for this extension. Clearly,
$g_1(0)$ is equal to~0. This fact will be essential to our arguments
later on. The functions $p$ and $g_2$ must admit smooth even
extensions to $[-1,1]$. Again, we preserve the notations $p$ and
$g_2$ for these extensions. It is clear that $p'(0)$ and $g_2'(0)$
are equal to~0 (the prime now designates differentiation with
respect to $\gamma$). This will also be important to our arguments
later. The derivative $g_1'(0)$ must equal $2\pi p(0)$. The values
$p(0)$ and $g_2(0)$ need to be positive. Thus, we have listed
several properties of the components of a rotationally symmetric
metric on $\mathcal T$ in our cylindrical coordinates. Conversely,
suppose we have three smooth functions $p$ and $g_1,g_2$ from
$[-1,1]$ to $\mathbb R$. Assume that $g_1$ is positive on $(0,1]$
and odd whereas $p$ and $g_2$ are positive on $[-1,1]$ and even. If
$g_1'(0)=2\pi p(0)$, then equality~\eqref{rot_all_T} defines a
rotationally symmetric metric on~$\mathcal T$. The reader may wish
to see, e.g.,~\cite{RH84,JDDKAY10} for related material.

In what follows, we fix a number $\tau\in\mathbb R$ and set
$\kappa=\sqrt{2|\tau|}\,$. Consider a rotationally symmetric metric
$G$ on $\mathcal T$ satisfying~\eqref{rot_all_T}. The notation
$\Ric(G)$ will stand for the Ricci curvature of $G$. Let us
introduce a new coordinate system $(\xi,\lambda,\mu)$ on $\mathcal
T$. We obtain it from $(\gamma,\lambda,\mu)$ by replacing the
parameter $\gamma$ with the parameter $\xi$ connected to $\gamma$ by
the formula
\begin{align}\label{def_s}
\xi(\gamma)=\int_0^\gamma p(\rho)\,d\rho,\qquad \gamma\in[0,1].
\end{align}
The values of $\xi$ range from 0 to the number $\xi_0$ equal to
$\int_0^1p(\rho)\,d\rho$. In the coordinate system
$(\xi,\lambda,\mu)$, we have
\begin{align}\label{rot_T_s}
G=d\xi\otimes d\xi+\bar g_1^2(\xi)\,d\lambda\otimes d\lambda+\bar
g_2^2(\xi)\,d\mu\otimes d\mu,\qquad\xi\in(0,\xi_0],
\end{align}
where $\bar g_1,\bar g_2$ are smooth functions from $[0,\xi_0]$ to
$[0,\infty)$. The comments made in the previous paragraph imply
\begin{align}\label{core_new_coord}
\bar g_1(0)=0,~\dot{\bar g}_1(0)=2\pi,~\dot{\bar g}_2(0)=0.
\end{align}
Here and in what follows, the dot designates differentiation with
respect to~$\xi$.

\begin{proposition}\label{prop_Einstein_+}
Suppose the rotationally symmetric metric $G$ on $\mathcal T$
satisfies equality~\eqref{rot_all_T} and solves the Einstein
equation
\begin{align}\label{Einstein_eq_+}
\Ric(G)=\tau G
\end{align}
on $\mathcal T$. If $\tau>0$, then $\xi_0$ lies in
$\big(0,\frac\pi\kappa\big)$ and there exists a constant $c_1>0$
such that
\begin{align}\label{met_E+}
G=d\xi\otimes d\xi+\frac{8\pi^2}{\kappa^2}(1-\cos\kappa
\xi)\,d\lambda\otimes d\lambda+c_1(1+\cos\kappa \xi)\,d\mu&\otimes
d\mu,\qquad \xi\in(0,\xi_0].
\end{align}
If $\tau<0$, then we can find $c_2>0$ ensuring that
\begin{align}\label{met_E-} G=d\xi\otimes d\xi+\frac{16\pi^2}{\kappa^2}\sinh^2\frac{\kappa\xi}2\,d\lambda\otimes d\lambda
+c_2\cosh^2\frac{\kappa\xi}2\,d\mu&\otimes d\mu,\qquad
\xi\in(0,\xi_0].
\end{align}
When $\tau=0$, there is $c_3>0$ such that
\begin{align}\label{met_E0}
G=d\xi\otimes d\xi+4\pi^2\xi^2\,d\lambda\otimes
d\lambda+c_3\,d\mu&\otimes d\mu,\qquad \xi\in(0,\xi_0].
\end{align}
\end{proposition}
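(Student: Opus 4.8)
The plan is to work in the coordinate system $(\lambda,\mu,s)$ defined by~\eqref{def_s}, in which $G$ takes the form~\eqref{rot_T_s} with $h\equiv1$. First I would compute the Ricci curvature of the metric $\bar f^2(s)\,d\lambda\otimes d\lambda+\bar g^2(s)\,d\mu\otimes d\mu+ds\otimes ds$ using Lemma~\ref{lem_Ricci_curv} with $f,g,h$ replaced by $\bar f,\bar g,1$ and $r$ replaced by $s$; since $h_s\equiv0$ the formulas simplify considerably. The Einstein equation~\eqref{Einstein_eq_+} then becomes the system
\begin{align*}
-\frac{\bar f_{ss}}{\bar f}-\frac{\bar f_s\bar g_s}{\bar f\bar g}=\tau,\quad
-\frac{\bar g_{ss}}{\bar g}-\frac{\bar f_s\bar g_s}{\bar f\bar g}=\tau,\quad
-\frac{\bar f_{ss}}{\bar f}-\frac{\bar g_{ss}}{\bar g}=\tau,
\end{align*}
valid for $s\in(0,s_0]$. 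Subtracting the first two equations gives $\bar f_{ss}/\bar f=\bar g_{ss}/\bar g$, and then combining with the third shows $\bar f_{ss}/\bar f=\bar g_{ss}/\bar g=-\tau/2$. Thus both $\bar f$ and $\bar g$ satisfy the same linear second-order ODE $u_{ss}=-\tfrac{\tau}{2}u$, whose general solution is elementary: trigonometric if $\tau>0$, hyperbolic if $\tau<0$, affine if $\tau=0$.

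Next I would pin down the constants using the smoothness-at-the-core conditions~\eqref{core_new_coord}, namely $\bar f(0)=0$, $\bar f_s(0)=2\pi$, $\bar g_s(0)=0$. For $\tau>0$, write $\bar f(s)=A\sin\kappa s+B\cos\kappa s$; then $\bar f(0)=0$ forces $B=0$, and $\bar f_s(0)=2\pi$ forces $A=2\pi/\kappa$, so $\bar f(s)=(2\pi/\kappa)\sin\kappa s$, whence $\bar f^2(s)=(4\pi^2/\kappa^2)\sin^2\kappa s=(8\pi^2/\kappa^2)\cdot\tfrac12(1-\cos\kappa s)$, i.e. the $d\lambda\otimes d\lambda$ coefficient in~\eqref{met_E+}. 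Similarly $\bar g(s)=C\cos\kappa s+D\sin\kappa s$ with $\bar g_s(0)=0$ forcing $D=0$, so $\bar g^2(s)=C^2\cos^2\kappa s=(C^2/2)(1+\cos\kappa s)$; setting $c_1=C^2/2>0$ (note $C\ne0$ since $\bar g$ is a positive function on $(0,s_0]$ near $0$) gives the $d\mu\otimes d\mu$ coefficient. The cases $\tau<0$ and $\tau=0$ are handled identically: $\bar f=(2\pi/\kappa)\sinh\kappa s$, $\bar g=c_2^{1/2}\cosh\kappa s$ in the first, and $\bar f=2\pi s$, $\bar g=c_3^{1/2}$ in the second, directly yielding~\eqref{met_E-} and~\eqref{met_E0}.

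It remains to establish the range restriction $s_0\in\big(0,\tfrac\pi\kappa\big)$ in the case $\tau>0$. Here I would use that $\bar f$ must be positive on all of $(0,s_0]$, because $f$ is positive on $(0,1]$ by the definition of a rotationally symmetric metric on $\mathcal T$ and the change of variables $s(r)$ is an increasing bijection from $[0,1]$ onto $[0,s_0]$. Since $\bar f(s)=(2\pi/\kappa)\sin\kappa s$ first returns to zero at $s=\pi/\kappa$, positivity on $(0,s_0]$ forces $s_0<\pi/\kappa$; combined with $s_0=\int_0^1 h(\rho)\,d\rho>0$ this gives the claimed interval. For $\tau\le0$ no such restriction arises since $\sinh\kappa s$ and $s$ are positive for all $s>0$.

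The computation is essentially routine once set up; the main obstacle, and the point deserving the most care, is the justification of the smoothness-at-the-core conditions~\eqref{core_new_coord} and, in particular, the fact that these three conditions (together with the \emph{a priori} positivity of $\bar f$ and $\bar g$ on $(0,s_0]$) are exactly what is needed to eliminate all free constants except the single harmless multiplicative constant $c_i$ in $\bar g$. One must also be slightly careful that the derivation of the ODE $u_{ss}=-\tfrac\tau2 u$ is valid only on the open interval where $\bar f,\bar g>0$, and then observe that the explicit solutions extend smoothly (and remain positive on $(0,s_0]$) so that they genuinely describe $G$ on all of $(0,s_0]$.
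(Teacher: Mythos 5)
Your overall strategy is sound and is in fact a genuinely cleaner route than the paper's: the paper never decouples the system, but instead works with the products $\bar f\bar g$, $\bar f_s\bar g$, $\bar f\bar g_s$ and integrates those. Your reduction to $\bar f_{ss}/\bar f=\bar g_{ss}/\bar g=-\tau/2$ is correct, as is the use of~\eqref{core_new_coord} to fix the constants. The problem is in the execution of the elementary step that follows. Since $\kappa=\sqrt{2|\tau|}$, the equation $u_{ss}=-\frac{\tau}{2}u$ has characteristic frequency $\sqrt{\tau/2}=\kappa/2$, not $\kappa$. Hence for $\tau>0$ the general solution is $A\sin(\kappa s/2)+B\cos(\kappa s/2)$, and the conditions~\eqref{core_new_coord} give $\bar f(s)=\frac{4\pi}{\kappa}\sin(\kappa s/2)$ and $\bar g(s)=\bar g(0)\cos(\kappa s/2)$. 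Your $\bar f=(2\pi/\kappa)\sin\kappa s$ does not solve the ODE you derived (it solves $u_{ss}=-2\tau u$), and the identity you then invoke, $\sin^2\kappa s=\frac12(1-\cos\kappa s)$, is false; the correct statement is $\sin^2(\kappa s/2)=\frac12(1-\cos\kappa s)$. These two slips cancel, which is why you still land on~\eqref{met_E+}, but the intermediate formulas are wrong.

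This matters for the range restriction. With the correct $\bar f=\frac{4\pi}{\kappa}\sin(\kappa s/2)$, the first positive zero of $\bar f$ occurs at $s=2\pi/\kappa$, so positivity of $\bar f$ alone only yields $s_0<2\pi/\kappa$. The bound $s_0<\pi/\kappa$ actually comes from $\bar g=\bar g(0)\cos(\kappa s/2)$, that is, from positivity of the $d\mu\otimes d\mu$ coefficient $1+\cos\kappa s$; your argument attributes the constraint to the wrong function and only appears to give the right endpoint because of the erroneous frequency. Note also that for $\tau<0$ your own ODE yields $\bar f=\frac{4\pi}{\kappa}\sinh(\kappa s/2)$, whose square is $\frac{8\pi^2}{\kappa^2}(\cosh\kappa s-1)$ rather than $\frac{4\pi^2}{\kappa^2}\sinh^2\kappa s$, so the hyperbolic formulas you quote are inconsistent with your own derivation and need to be rechecked against the normalization of $\kappa$. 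Once the frequency is corrected throughout, your decoupling argument does go through and recovers the paper's conclusion in the case $\tau>0$.
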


\begin{proof}
We will only consider the case where $\tau>0$. Analogous arguments
work if $\tau<0$ or $\tau=0$. The metric $G$ can be written in the
form~\eqref{rot_T_s}. Our goal is to find the functions $\bar g_1$
and $\bar g_2$. The Einstein equation~\eqref{Einstein_eq_+} and
Lemma~\ref{lem_Ricci_curv} imply
\begin{align}\label{eq_coord_Einstein}
-\bar g_2\ddot{\bar g}_1-\dot{\bar g}_1\dot{\bar g}_2&=\tau\bar g_1\bar g_2,\notag\\
-\bar g_1\ddot{\bar g}_2-\dot{\bar g}_1\dot{\bar g}_2&=\tau\bar
g_1\bar g_2, \qquad\xi\in(0,\xi_0].
\end{align}
Adding these together yields
\begin{align*}
(\bar g_1\bar g_2)\,\ddot{}+2\tau\bar g_1\bar g_2=0,\qquad
\xi\in(0,\xi_0].
\end{align*}
In view of~\eqref{core_new_coord}, we conclude that
\begin{align*}
\bar g_1(\xi)\bar g_2(\xi)=\frac{2\pi\bar g_2(0)}\kappa\sin\kappa
\xi,\qquad\xi\in[0,\xi_0].
\end{align*}
It now follows from~\eqref{eq_coord_Einstein}
and~\eqref{core_new_coord} that
\begin{align*}
\dot{\bar g}_1(\xi)\bar g_2(\xi)&=\pi\bar g_2(0)(\cos\kappa\xi+1), \\
\bar g_1(\xi)\dot{\bar g}_2(\xi)&=\pi\bar g_2(0)(\cos\kappa
\xi-1),\qquad\xi\in[0,\xi_0].
\end{align*}
Manipulating the last three equalities and making use
of~\eqref{core_new_coord} again, we obtain
\begin{align*}
\bar
g_1(\xi)&=\frac{2\sqrt2\,\pi\sin\kappa\xi}{\kappa\sqrt{1+\cos\kappa\xi}}\,, \\
\bar g_2(\xi)&=\frac{\bar g_2(0)}{\sqrt2}\,\sqrt{1+\cos\kappa
\xi}\,,\qquad \xi\in[0,\xi_0].
\end{align*}
It becomes clear that $\xi_0$ must lie in
$\big(0,\frac\pi\kappa\big)$ and the metric $G$ must
satisfy~\eqref{met_E+}.
\end{proof}

\begin{remark}
Proposition~\ref{prop_Einstein_+} enables us to make conclusions
about the existence of solutions to the Einstein equation on
$\mathcal T$ with given boundary data. For example, suppose $G$ is a
rotationally symmetric metric on $\mathcal T$. Fix two numbers
$\beta_1,\beta_2>0$. Let $G_{\partial\mathcal T}$ denote the
Riemannian metric on $\partial\mathcal T$ induced by~$G$.
Proposition~\ref{prop_Einstein_+} implies that, if $\tau>0$ and
$\beta_1\ge\frac{4\pi}{\kappa}$, equation~\eqref{Einstein_eq_+} and
the equality
\begin{align*}
G_{\partial\mathcal T}=\beta_1^2\,d\lambda\otimes
d\lambda+\beta_2^2\,d\mu\otimes d\mu
\end{align*}
cannot be satisfied simultaneously. This observation is somewhat
related to the material in~\cite{MA08,MA09}.
\end{remark}

The converse of Proposition~\ref{prop_Einstein_+} holds as well.
More precisely, suppose $p$ is a smooth positive even function on
$[-1,1]$. Introduce the coordinate system $(\xi,\lambda,\mu)$ on
$\mathcal T$ by replacing the parameter $\gamma$ in the coordinate
system $(\gamma,\lambda,\mu)$ with the parameter $\xi$ related to
$\gamma$ through~\eqref{def_s}. If $\tau>0$ and
$\xi_0\in\big(0,\frac\pi\kappa\big)$, then formula~\eqref{met_E+}
defines a rotationally symmetric metric on~$\mathcal T$ for any
$c_1>0$. It is not difficult to verify that this metric solves the
Einstein equation~\eqref{Einstein_eq_+}. In doing so,
Lemma~\ref{lem_Ricci_curv} may come in handy. Let us now assume
$\tau<0$. Formula~\eqref{met_E-} determines a rotationally symmetric
metric on $\mathcal T$ for any $c_2>0$. One easily checks that this
metric satisfies~\eqref{Einstein_eq_+}. Finally, let us assume
$\tau=0$. We arrive at similar conclusions. Namely,~\eqref{met_E0}
yields a rotationally symmetric metric on $\mathcal T$ for any
$c_3>0$. This metric solves~\eqref{Einstein_eq_+}.

\section*{Acknowledgements}

I express my gratitude to Andrea Young for the many stimulating
discussions on the topics covered in the present paper. I am also
thankful to Dan Knopf for our conversations about the cross
curvature flow on a solid torus. To a large extent, those
conversations inspired the work above. Finally, I thank Ben Babao
for pointing out a computational error in an earlier version of the
paper and also Ye Kai Wang and the referee for suggesting
Remark~\ref{rem_Gauss-Codazzi} to me.

\end{document}